\documentclass[11pt]{article}
\usepackage{amsmath}
\usepackage{amssymb}
\usepackage{amsthm}
\usepackage{mathabx}
\usepackage[usenames]{color}
\usepackage{amscd}
\usepackage{dsfont}
\usepackage{indentfirst}

\usepackage[colorlinks=true,linkcolor=blue,filecolor=red,
citecolor=webgreen]{hyperref}
\definecolor{webgreen}{rgb}{0,.5,0}

\hoffset=-.7truein \voffset=-.6truein \textwidth=160mm
\textheight=210mm

\def\C{{\mathds{C}}}
\def\Q{{\mathds{Q}}}
\def\R{{\mathbb{R}}}
\def\N{{\mathds{N}}}
\def\Z{{\mathds{Z}}}
\def\Primes{{\mathds{P}}}
\def\1{{\bf 1}}

\def\pont{$\bullet$ }

\newtheorem{theorem}{Theorem}

\newtheorem{corollary}{Corollary}

\newtheorem{proposition}{Proposition}
\newtheorem{remark}{Remark}

\begin{document}

\title{{\bf Ramanujan expansions of arithmetic functions of several variables}}
\author{L\'aszl\'o T\'oth\\ \\ Department of Mathematics, University of P\'ecs \\
Ifj\'us\'ag \'utja 6, 7624 P\'ecs, Hungary \\ E-mail: {\tt ltoth@gamma.ttk.pte.hu}}
\date{}
\maketitle

\centerline{The Ramanujan Journal {\bf 47} (2018), Issue 3, 589--603}

\begin{abstract} We generalize certain recent results of Ushiroya concerning Ramanujan expansions of arithmetic
functions of two variables. We also show that some properties on expansions of arithmetic
functions of one and several variables using classical and unitary Ramanujan sums, respectively, run parallel.
\end{abstract}

{\sl 2010 Mathematics Subject Classification}: 11A25, 11N37

{\sl Key Words and Phrases}: Ramanujan expansion of arithmetic functions, arithmetic function of several variables, multiplicative function,
unitary divisor, unitary Ramanujan sum

\section{Introduction}
Basic notations, used throughout the paper are included in Section \ref{Sect_Not}. Further notations are explained by
their first appearance. Refining results of Wintner \cite{Win1944},
Delange \cite{Del1976} proved the following general theorem concerning Ramanujan expansions of arithmetic functions.

\begin{theorem} \label{Th_Delange} Let $f:\N \to \C$ be an arithmetic function. Assume that
\begin{equation} \label{cond_one_var}
\sum_{n=1}^{\infty} 2^{\omega(n)} \frac{|(\mu*f)(n)|}{n} < \infty.
\end{equation}

Then for every $n\in\N$ we have the absolutely convergent Ramanujan expansion
\begin{equation} \label{f_Raman_exp_one_var}
f(n) = \sum_{q=1}^{\infty} a_{q} c_{q}(n),
\end{equation}
where the coefficients $a_q$ are given by
\begin{equation*}
a_q = \sum_{m=1}^{\infty} \frac{(\mu*f)(mq)}{mq} \quad (q\in \N).
\end{equation*}
\end{theorem}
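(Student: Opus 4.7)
Set $g:=\mu*f$, so that $f=\1*g$, i.e., $f(n)=\sum_{d\mid n}g(d)$. I will first derive the expansion \eqref{f_Raman_exp_one_var} by a formal interchange of summations, and then justify the interchange by an absolute-convergence estimate; the same estimate will give the absolute convergence of \eqref{f_Raman_exp_one_var}.

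The starting point is the identity
\[
\sum_{q\mid k}c_q(n)=\begin{cases}k, & \text{if }k\mid n,\\[2pt]0, & \text{otherwise},\end{cases}
\]
valid for all $k,n\in\N$. It follows by expanding $c_q(n)=\sum_{d\mid\gcd(q,n)}d\,\mu(q/d)$, swapping the two sums, and evaluating $\sum_{e\mid k/d}\mu(e)$. Multiplying by $g(k)/k$, summing over $k\ge 1$, and swapping the order of summation (writing $k=mq$) yields
\[
f(n)=\sum_{k=1}^{\infty}\frac{g(k)}{k}\sum_{q\mid k}c_q(n)=\sum_{q=1}^{\infty}c_q(n)\sum_{m=1}^{\infty}\frac{g(mq)}{mq}=\sum_{q=1}^{\infty}a_qc_q(n),
\]
which is exactly \eqref{f_Raman_exp_one_var}.

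For the interchange I need a usable bound on $\sum_{q\mid k}|c_q(n)|$. I will use the standard facts that $c_q(n)=0$ unless $q/\gcd(q,n)$ is squarefree, and $|c_q(n)|\le\gcd(q,n)$ (both consequences of H\"older's formula $c_q(n)=\mu(q/d)\varphi(q)/\varphi(q/d)$ with $d=\gcd(q,n)$). Parameterising a nonzero $q\mid k$ as $q=ab$ with $a=\gcd(q,n)\mid\gcd(k,n)$ and $b$ squarefree dividing $k/a$ gives at most $2^{\omega(k/a)}$ choices of $b$, whence
\[
\sum_{q\mid k}|c_q(n)|\le\sum_{a\mid\gcd(k,n)}a\cdot 2^{\omega(k/a)}\le\sigma(n)\cdot 2^{\omega(k)}.
\]
Combined with \eqref{cond_one_var}, this yields
\[
\sum_{q=1}^{\infty}\sum_{m=1}^{\infty}\frac{|g(mq)|}{mq}|c_q(n)|=\sum_{k=1}^{\infty}\frac{|g(k)|}{k}\sum_{q\mid k}|c_q(n)|\le\sigma(n)\sum_{k=1}^{\infty}\frac{2^{\omega(k)}|g(k)|}{k}<\infty,
\]
which simultaneously justifies the interchange by Fubini and establishes the absolute convergence of \eqref{f_Raman_exp_one_var}.

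The main obstacle is the bound $\sum_{q\mid k}|c_q(n)|\le\sigma(n)\,2^{\omega(k)}$: without exploiting the vanishing of $c_q(n)$ when $q/\gcd(q,n)$ is not squarefree, one would be left with a $d(k)$ factor in place of $2^{\omega(k)}$, which would not be controlled by \eqref{cond_one_var}. All other steps amount to elementary M\"obius inversion and Fubini's theorem.
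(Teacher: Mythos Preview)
Your proof is correct and follows essentially the same route as the paper (which treats this as the $k=1$ case of Theorem~\ref{Th_f_gen}): write $f=\1*g$ with $g=\mu*f$, use the divisor identity $\sum_{q\mid k}c_q(n)=k\cdot[k\mid n]$, interchange, and control absolute convergence via an upper bound for $\sum_{q\mid k}|c_q(n)|$. The only difference is cosmetic: the paper invokes the exact identity $\sum_{q\mid k}|c_q(n)|=2^{\omega(k/(n,k))}(n,k)\le 2^{\omega(k)}n$ (Remark~\ref{Class_Raman}), whereas you derive the slightly weaker but equally sufficient bound $\le 2^{\omega(k)}\sigma(n)$ directly from H\"older's formula.
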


Delange also pointed out how this result can be formulated for
multiplicative functions $f$. By Wintner's theorem (\cite[Part I]{Win1944}, also see, e.g., Postnikov \cite[Ch.\, 3]{Pos1988},
Schwarz and Spilker \cite[Ch.\, II]{SchSpi1994}), condition \eqref{cond_one_var} ensures that the mean value
\begin{equation*}
M(f) = \lim_{x\to \infty} \frac1{x} \sum_{n\le x} f(n)
\end{equation*}
exists and $a_1=M(f)$.

Recently, Ushiroya \cite{Ush2016} proved the analog of Theorem \ref{Th_Delange} for arithmetic functions of two variables and as applications,
derived Ramanujan expansions of certain special functions. For example, he deduced that for any fixed $n_1,n_2\in \N$,
\begin{equation} \label{exp_sigma_two_var}
\frac{\sigma((n_1,n_2))}{(n_1,n_2)}= \zeta(3) \sum_{q_1,q_2=1}^{\infty} \frac{c_{q_1}(n_1)c_{q_2}(n_2)}{[q_1,q_2]^3},
\end{equation}
\begin{equation} \label{exp_tau_two_var}
\tau((n_1,n_2))= \zeta(2) \sum_{q_1,q_2=1}^{\infty} \frac{c_{q_1}(n_1)c_{q_2}(n_2)}{[q_1,q_2]^2}.
\end{equation}

Here \eqref{exp_sigma_two_var} corresponds to the classical identity
\begin{equation} \label{exp_sigma_one_var}
\frac{\sigma(n)}{n}= \zeta(2) \sum_{q=1}^{\infty} \frac{c_q(n)}{q^2} \quad (n\in \N),
\end{equation}
due to Ramanujan \cite{Ramanujan1918}. However, \eqref{exp_tau_two_var} has not a direct one dimensional analog. The identity of Ramanujan
for the divisor function, namely,
\begin{equation*}
\tau(n) =  - \sum_{q=1}^{\infty} \frac{\log q}{q} c_q(n) \quad (n\in \N)
\end{equation*}
cannot be obtained by the same approach, since the mean value $M(\tau)$ does not exist.

We remark that prior to Delange's paper, Cohen \cite{Coh1961} pointed out how absolutely convergent expansions \eqref{f_Raman_exp_one_var},
including \eqref{exp_sigma_one_var} can be deduced for some special classes of multiplicative functions of one variable.

It is easy to see that the same arguments of the papers by Delange \cite{Del1976} and Ushiroya \cite{Ush2016} lead to the extension of
Theorem \ref{Th_Delange} for Ramanujan expansions of (multiplicative) arithmetic functions of $k$ variables, for any $k\in \N$. But in order
to obtain $k$ dimensional versions of the identities \eqref{exp_sigma_two_var} and \eqref{exp_tau_two_var}, the method given in
\cite{Ush2016} to compute the coefficients is complicated.

Recall that $d$ is a unitary divisor (or block divisor) of $n$ if $d\mid n$ and $(d,n/d)=1$. Various problems concerning functions associated to
unitary divisors were discussed by several authors in the literature. Many properties of the functions $\sigma^*(n)$ and
$\tau^*(n)=2^{\omega(n)}$, representing the sum, respectively the number of unitary divisors of $n$ are similar to $\sigma(n)$ and $\tau(n)$.
Analogs of Ramanujan sums defined by unitary divisors, denoted by $c^*_q(n)$ and called unitary Ramanujan sums, are also known in the literature.
However, we are not aware of any paper concerning expansions of functions with respect to the sums $c^*_q(n)$. The only such paper we found is by
Subbarao \cite{Sub1966}, but it deals with a series over the other argument, namely $n$ in $c^*_q(n)$. See Section \ref{Sect_Raman_unit}.

In this paper we show that certain properties on expansions of functions of one and several variables using classical and unitary
Ramanujan sums, respectively, run parallel. We also present a simple approach to deduce Ramanujan expansions of both types for $g((n_1,\ldots,n_k))$, where $g:\N\to \C$ is a certain function. For example, we have the identities

\begin{equation*}
\frac{\sigma(n)}{n}= \zeta(2) \sum_{q=1}^{\infty} \frac{\phi_2(q)c^*_q(n)}{q^4} \quad (n\in \N),
\end{equation*}
\begin{equation} \label{sigma_two_var_unit}
\frac{\sigma((n_1,n_2))}{(n_1,n_2)}= \zeta(3) \sum_{q_1,q_2=1}^{\infty} \frac{\phi_3([q_1,q_2])c^*_{q_1}(n_1)c^*_{q_2}(n_2)}{[q_1,q_2]^6}
\quad (n_1,n_2\in \N),
\end{equation}
\begin{equation} \label{tau_two_var_unit}
\tau((n_1,n_2))= \zeta(2) \sum_{q_1,q_2=1}^{\infty} \frac{\phi_2([q_1,q_2])c^*_{q_1}(n_1)c^*_{q_2}(n_2)}{[q_1,q_2]^4} \quad (n_1,n_2\in \N),
\end{equation}
which can be compared to \eqref{exp_sigma_one_var}, \eqref{exp_sigma_two_var} and \eqref{exp_tau_two_var}, respectively.

We point out that there are identities concerning the sums $c^*_q(n)$, which do not have simple counterparts in the classical case. For example,
\begin{equation} \label{id_Piltz_unit_two}
\tau_3((n_1,n_2))
\end{equation}
\begin{equation*}
 = (\zeta(2))^2 \sum_{q_1,q_2=1}^{\infty}
\frac{\tau([q_1,q_2]) (\phi_2([q_1,q_2]))^2}{[q_1,q_2]^6} c^*_{q_1}(n_1)c^*_{q_2}(n_2) \quad (n_1,n_2\in \N).
\end{equation*}

For general accounts on classical Ramanujan sums and Ramanujan expansions of functions of one variable we refer to the book by
Schwarz and Spilker \cite{SchSpi1994} and to the survey papers by Lucht \cite{Luc2010} and  Ram~Murty \cite{Ram2013}.
See Sections \ref{Sect_Funct_unit} and \ref{Sect_Raman_unit} for properties and references on functions defined by unitary divisors and
unitary Ramanujan sums. Section \ref{Sect_func_several_var} includes the needed background on arithmetic functions of several variables.
Our main results and their proofs are presented in Section \ref{Sect_Results}.

\section{Notations} \label{Sect_Not}

\pont $\N=\{1,2,\ldots\}$, $\N_0=\{0,1,2,\ldots\}$,

\pont $\Primes$ is the set of (positive) primes,

\pont the prime power factorization of $n\in \N$ is $n=\prod_{p\in \Primes}
p^{\nu_p(n)}$, the product being over the primes $p$, where all but
a finite number of the exponents $\nu_p(n)$ are zero,

\pont $(n_1,\ldots,n_k)$ and $[n_1,\ldots,n_k]$ denote the greatest common divisor and the least common multiple, respectively
of $n_1,\ldots,n_k\in \N$,

\pont $(f*g)(n)=\sum_{d\mid n} f(d)g(n/d)$ is the Dirichlet convolution of the functions $f,g:\N \to \C$,

\pont $\omega(n)$ stands for the number of distinct prime divisors of $n$,

\pont $\mu$ is the M\"obius function,

\pont $\sigma_s(n)=\sum_{d\mid n} d^s$ ($s\in \R$),

\pont $\sigma(n)=\sigma_1(n)$ is the sum of divisors of $n$,

\pont $\tau(n)=\sigma_0(n)$ is the number of divisors of $n$,

\pont $\phi_s$ is the Jordan function of order $s$ given by
$\phi_s(n)=n^s\prod_{p\mid n} (1-1/p^s)$ ($s\in \R$),

\pont $\phi=\phi_1$ is Euler's totient function,

\pont $\psi_s$ is the generalized Dedekind function given by $\psi_s(n)=n^s \prod_{p\mid n} (1+1/p^s)$ ($s\in \R$),

\pont $\psi=\psi_1$ is the Dedekind function,

\pont $c_q(n)=\sum_{1\le k \le q, (k,q)=1} \exp(2\pi ikn/q)$
 are the Ramanujan sums ($q,n\in \N$),

\pont $\tau_k(n)$ is the Piltz divisor function, defined as the number of ways that $n$ can be written as a product of
$k$ positive integers,

\pont $\lambda(n)=(-1)^{\Omega(n)}$ is the Liouville function, where $\Omega(n)=\sum_p \nu_p(n)$,

\pont $\zeta$ is the Riemann zeta function,

\pont $d\mid\mid n$ means that $d$ is a unitary divisor of $n$, i.e., $d\mid n$ and $(d,n/d)=1$.
(We remark that this is in concordance with the standard notation $p^{\nu} \mid\mid n$ used for prime powers $p^\nu$.)

\section{Preliminaries}

\subsection{Functions of one variable defined by unitary divisors} \label{Sect_Funct_unit}

The study of arithmetic functions defined by unitary divisors goes back to Vaidyanathaswamy \cite{Vai1931} and Cohen \cite{Coh1960}.
The functions $\sigma^*(n)$ and $\tau^*(n)$ were already mentioned in the Introduction. The analog of Euler's $\phi$ function is $\phi^*$,
defined by $\phi^*(n)=\# \{k\in \N: 1\le k \le n, (k,n)_*=1\}$, where
\begin{equation*}
(k,n)_*=\max \{d: d\mid k, d\mid\mid n\}.
\end{equation*}

Note that $d\mid\mid (k,n)_*$  holds if and only if $d\mid k$ and $d\mid\mid n$. The functions $\sigma^*$, $\tau^*$,
$\phi^*$ are all multiplicative and $\sigma^*(p^\nu)= p^{\nu}+1$, $\tau^*(p^\nu)=2$, $\phi^*(p^\nu)=p^{\nu}-1$ for any prime powers $p^{\nu}$
($\nu \ge 1$).

The unitary convolution of the functions $f$ and $g$ is
\begin{equation*}
(f\times g)(n)=\sum_{d\mid\mid n} f(d)g(n/d) \quad (n\in \N),
\end{equation*}
it preserves the multiplicativity of functions, and the inverse of the constant $1$ function under the unitary convolution is $\mu^*$,
where $\mu^*(n)=(-1)^{\omega(n)}$, also multiplicative. The set ${\cal A}$ of arithmetic functions forms a unital commutative ring
with pointwise addition and the unitary convolution, having divisors of zero.

See, e.g., Derbal \cite{Der2006}, McCarthy \cite{McC1986}, Sitaramachandrarao and Suryanarayana
\cite{SitSur1973}, Snellman \cite{Sne2004} for properties and generalizations of functions associated to unitary divisors.

\subsection{Unitary Ramanujan sums} \label{Sect_Raman_unit}

The unitary Ramanujan sums $c^*_q(n)$ were defined by Cohen \cite{Coh1960} as follows:
\begin{equation*}
c^*_q(n) = \sum_{\substack{1\le k \le q \\ (k,q)_*=1}} \exp(2\pi ikn/q) \quad (q,n\in \N).
\end{equation*}

The identities
\begin{equation*}
c^*_q(n) = \sum_{d\mid\mid (n,q)_*} d\mu^*(q/d) \quad (q,n\in \N),
\end{equation*}
\begin{equation} \label{sum_unit_Raman_sum}
\sum_{d\mid\mid q} c^*_d(n) = \begin{cases} q, & \text{ if $q\mid n$,}\\ 0, & \text{ otherwise}
\end{cases}
\end{equation}
can be compared to the corresponding ones concerning the Ramanujan sums $c_q(n)$.

It turns out that $c^*_q(n)$ is multiplicative in $q$ for any fixed $n\in \N$, and
\begin{equation} \label{unit_Raman_prime_pow}
c^*_{p^\nu}(n) = \begin{cases} p^\nu-1, & \text{ if $p^\nu \mid n$,}\\ -1, & \text{ otherwise,}
\end{cases}
\end{equation}
for any prime powers $p^{\nu}$ ($\nu \ge 1$). Furthermore, $c^*_q(q)=\phi^*(q)$, $c^*_q(1)=\mu^*(q)$ ($q\in \N$).

If $q$ is squarefree, then the divisors of $q$ coincide with its unitary divisors. Therefore, $c^*_q(n)=c_q(n)$ for any squarefree
$q$ and any $n\in \N$. However, if $q$ is not squarefree, then there is no direct relation between these two sums.

\begin{proposition} \label{Prop_unit_Raman} For any $q,n\in \N$,
\begin{equation} \label{unit_Raman_abs_id}
\sum_{d\mid\mid q} |c^*_d(n)| = 2^{\omega(q/(n,q)_*)} (n,q)_*,
\end{equation}
\begin{equation} \label{unit_Raman_abs_ineq}
\sum_{d\mid\mid q} |c^*_d(n)| \le  2^{\omega(q)} n.
\end{equation}
\end{proposition}

\begin{proof} If $q=p^\nu$ ($\nu \ge 1$) is a prime power, then we have by \eqref{unit_Raman_prime_pow},
\begin{equation*}
\sum_{d\mid\mid p^\nu} |c^*_d(n)| = |c^*_1(n)| + |c^*_{p^\nu}(n)|= \begin{cases} 1+p^\nu-1=p^\nu, & \text{ if $p^\nu \mid n$,} \\ 1+1=2,
& \text{ otherwise.}
\end{cases}
\end{equation*}

Now \eqref{unit_Raman_abs_id} follows at once by the multiplicativity in $q$ of the involved functions, while \eqref{unit_Raman_abs_ineq} is its
immediate consequence.
\end{proof}

\begin{remark} \label{Class_Raman} {\rm The corresponding properties for the classical Ramanujan sums are
\begin{equation} \label{Raman_abs_id}
\sum_{d\mid q} |c_d(n)| = 2^{\omega(q/(n,q))} (n,q),
\end{equation}
\begin{equation} \label{Raman_abs_ineq}
\sum_{d\mid q} |c_d(n)| \le  2^{\omega(q)} n,
\end{equation}
where inequality \eqref{Raman_abs_ineq} is crucial in the proof of Theorem \ref{Th_Delange}, and identity \eqref{Raman_abs_id} was pointed out
by Grytczuk \cite{Gry1981}. }
\end{remark}

Of course, some other properties differ notably. For example, the unitary sums $c^*_q(n)$ do not enjoy the orthogonality property of the
classical Ramanujan sums, namely
\begin{equation*}
\frac1{n} \sum_{k=1}^n c_{q_1}(k)c_{q_2}(k) = \begin{cases} \phi(q), & \text{ if $q_1= q_2=q$,}\\ 0, & \text{ otherwise,}
\end{cases}
\end{equation*}
valid for any $q_1,q_2,n\in \N$ with $[q_1,q_2]\mid n$. To see this, let $p$ be a prime, let $q_1=p$, $q_2=p^2$ and $n=p^2$. Then,
according to \eqref{unit_Raman_prime_pow},
\begin{equation*}
\sum_{k=1}^{p^2} c^*_p(k)c^*_{p^2}(k) = p^2(p-1) \ne 0.
\end{equation*}

Let $\Lambda^*$ be the unitary analog of the von Mangoldt function $\Lambda$, given by
\begin{equation*}
\Lambda^*(k) = \begin{cases} \nu \log p, & \text{ if $k=p^\nu$,}\\ 0, & \text{ otherwise.}
\end{cases}
\end{equation*}

The paper of Subbarao \cite{Sub1966} includes the formula
\begin{equation} \label{series_Lambda_star}
\sum_{n=1}^{\infty} \frac{c^*_q(n)}{n} = - \Lambda^*(q) \quad (q\ge 2),
\end{equation}
with an incomplete proof, namely without showing that the series in \eqref{series_Lambda_star} converges. Here
\eqref{series_Lambda_star} is the analog of Ramanujan's formula (also see H\"older \cite{Hol1936})
\begin{equation*}
\sum_{n=1}^{\infty} \frac{c_q(n)}{n} = - \Lambda(q) \quad (q\ge 2).
\end{equation*}

For further properties and generalizations of unitary Ramanujan sums, see, e.g.,
Cohen \cite{Coh1960}, Johnson \cite{Joh1982}, McCarthy \cite{McC1986}, Suryanarayana \cite{Sur1970}. Note that a common
generalization of the sums $c_q(n)$ and $c^*_q(n)$, involving Narkiewicz type regular systems of divisors was
investigated by McCarthy \cite{McC1968} and the author \cite{Tot2004}.

\subsection{Arithmetic functions of several variables} \label{Sect_func_several_var}

For every fixed $k\in \N$ the set ${\cal A}_k$ of arithmetic functions $f:\N^k\to \C$ of
$k$ variables is an integral domain with pointwise addition and the Dirichlet convolution defined by
\begin{equation} \label{Dir_convo_sev_var}
(f*g)(n_1,\ldots,n_k)= \sum_{d_1\mid n_1, \ldots, d_k\mid n_k}
f(d_1,\ldots,d_k) g(n_1/d_1, \ldots, n_k/d_k),
\end{equation}
the unity being the function $\delta_k$, where
\begin{equation*}
\delta_k(n_1,\ldots,n_k)= \begin{cases} 1, & \text{ if $n_1=\cdots =n_k=1$,}\\ 0, & \text{ otherwise.}
\end{cases}
\end{equation*}

The inverse of the constant $1$ function under \eqref{Dir_convo_sev_var} is $\mu_k$, given by
\begin{equation*}
\mu_k(n_1,\ldots,n_k)=\mu(n_1)\cdots \mu(n_k) \quad (n_1,\ldots,n_k\in \N),
\end{equation*}
where $\mu$ is the (classical) M\"obius function.

A function $f\in {\cal A}_k$ is said to be multiplicative if it is
not identically zero and
\begin{equation*}
f(m_1n_1,\ldots,m_kn_k)= f(m_1,\ldots,m_k) f(n_1,\ldots,n_k)
\end{equation*}
holds for any $m_1,\ldots,m_k,n_1,\ldots,n_k\in \N$ such that
$(m_1\cdots m_k,n_1\cdots n_k)=1$.

If $f$ is multiplicative, then it is determined by the values
$f(p^{\nu_1},\ldots,p^{\nu_k})$, where $p$ is prime and
$\nu_1,\ldots,\nu_k\in \N_0$. More exactly, $f(1,\ldots,1)=1$ and
for any $n_1,\ldots,n_k\in \N$,
\begin{equation*}
f(n_1,\ldots,n_k)= \prod_{p\in \Primes} f(p^{\nu_p(n_1)}, \ldots,p^{\nu_p(n_k)}).
\end{equation*}

Similar to the one dimensional case, the Dirichlet convolution \eqref{Dir_convo_sev_var} preserves the multiplicativity of
functions. The unitary convolution for the $k$ variables case can be defined and investigated, as well, but we will not need it in
what follows. See our paper \cite{Tot2014}, which is a survey on (multiplicative) arithmetic functions of several variables.

\section{Main results} \label{Sect_Results}

We first prove the following general result:

\begin{theorem} \label{Th_f_gen} Let $f:\N^k \to \C$ be an arithmetic function {\rm ($k\in \N$)}.
Assume that
\begin{equation} \label{cond_several_var}
\sum_{n_1,\ldots,n_k=1}^{\infty} 2^{\omega(n_1)+\cdots + \omega(n_k)} \frac{|(\mu_k*f)(n_1,\ldots,n_k)|}{n_1\cdots n_k}< \infty.
\end{equation}

Then for every $n_1,\ldots,n_k\in \N$,
\begin{equation} \label{f_Raman_exp_several}
f(n_1,\ldots,n_k) = \sum_{q_1,\ldots,q_k=1}^{\infty} a_{q_1,\ldots,q_k} c_{q_1}(n_1)\cdots c_{q_k}(n_k),
\end{equation}
and
\begin{equation} \label{f_Raman_exp_unit_several}
f(n_1,\ldots,n_k) = \sum_{q_1,\ldots,q_k=1}^{\infty} a^*_{q_1,\ldots,q_k} c^*_{q_1}(n_1)\cdots c^*_{q_k}(n_k),
\end{equation}
where
\begin{equation*}
a_{q_1,\ldots,q_k} = \sum_{m_1,\ldots,m_k=1}^{\infty} \frac{(\mu_k*f)(m_1q_1,\ldots,m_kq_k)}{m_1q_1\cdots m_kq_k},
\end{equation*}
\begin{equation} \label{coeff_unit_case}
a^*_{q_1,\ldots,q_k} = \sum_{\substack{m_1,\ldots,m_k=1\\(m_1,q_1)=1,\ldots,(m_k,q_k)=1}}^{\infty}
\frac{(\mu_k*f)(m_1q_1,\ldots,m_kq_k)}{m_1q_1\cdots m_kq_k},
\end{equation}
the series \eqref{f_Raman_exp_several} and \eqref{f_Raman_exp_unit_several} being absolutely convergent.
\end{theorem}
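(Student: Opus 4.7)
The plan is to adapt Delange's one-variable argument to $k$ variables, with the classical and unitary cases running in parallel thanks to estimates \eqref{Raman_abs_ineq} and \eqref{unit_Raman_abs_ineq}. I set $g = \mu_k * f$, so that inversion gives
\begin{equation*}
f(n_1,\ldots,n_k) = \sum_{d_1\mid n_1,\ldots,d_k\mid n_k} g(d_1,\ldots,d_k) = \sum_{q_1,\ldots,q_k=1}^{\infty} g(q_1,\ldots,q_k) \prod_{i=1}^k \mathbf{1}[q_i \mid n_i],
\end{equation*}
where the second equality simply extends the divisibility conditions to unrestricted sums over $\N$.

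Next I replace each indicator. In the classical case, Ramanujan's identity $\sum_{d\mid q} c_d(n) = q\,\mathbf{1}[q\mid n]$ gives $\mathbf{1}[q_i\mid n_i] = \frac{1}{q_i}\sum_{d_i\mid q_i} c_{d_i}(n_i)$; in the unitary case the analogous identity \eqref{sum_unit_Raman_sum} yields $\mathbf{1}[q_i\mid n_i] = \frac{1}{q_i}\sum_{d_i\mid\mid q_i} c^*_{d_i}(n_i)$. After substituting and swapping the order of summation, the outer sum becomes indexed by $d_1,\ldots,d_k$ while the inner sum parametrizes $q_i = d_i m_i$ for $m_i\ge 1$. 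In the unitary case the condition $d_i\mid\mid d_im_i$ is equivalent to $(d_i,m_i)=1$, producing precisely the coprimality restriction in \eqref{coeff_unit_case}. This yields the two expansions with the stated coefficients.

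The essential technical point will be absolute convergence, which simultaneously licenses the interchange of sums and delivers absolute convergence of the final Ramanujan series. By \eqref{Raman_abs_ineq}, the total absolute sum in the classical case is bounded above by
\begin{equation*}
n_1\cdots n_k \sum_{q_1,\ldots,q_k=1}^{\infty} \frac{2^{\omega(q_1)+\cdots+\omega(q_k)}\,|g(q_1,\ldots,q_k)|}{q_1\cdots q_k},
\end{equation*}
which is finite by hypothesis \eqref{cond_several_var}. The unitary case is identical, using \eqref{unit_Raman_abs_ineq} in place of \eqref{Raman_abs_ineq}.

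The hardest part is really just careful bookkeeping: verifying that the Fubini-type interchange is valid (which reduces to the absolute convergence estimate above) and checking that in the unitary setting the substitution $q_i = d_im_i$ with $d_i\mid\mid q_i$ translates correctly into $(d_i,m_i)=1$ so as to reproduce \eqref{coeff_unit_case}. Beyond these routine steps, the argument is a direct $k$-variable analog of the proof of Theorem \ref{Th_Delange}.
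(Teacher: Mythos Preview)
Your proposal is correct and follows essentially the same approach as the paper: M\"obius inversion, substitution of the identity $\sum_{d\mid q} c_d(n)=q\,\mathbf{1}[q\mid n]$ (resp.\ \eqref{sum_unit_Raman_sum}), interchange of summations, and the absolute-convergence estimate via \eqref{Raman_abs_ineq} (resp.\ \eqref{unit_Raman_abs_ineq}) together with hypothesis \eqref{cond_several_var}. The only cosmetic difference is that the paper phrases the convergence check as a bound on $\sum |a^{(*)}_{q_1,\ldots,q_k}|\,|c^{(*)}_{q_1}(n_1)|\cdots|c^{(*)}_{q_k}(n_k)|$, regrouping by $t_i=m_iq_i$, whereas you bound the pre-interchange double sum directly; both lead to the same majorant.
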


We remark that according to the generalized Wintner theorem, under conditions of Theorem \ref{Th_f_gen}, the mean value
\begin{equation*}
M(f)= \lim_{x_1,\ldots, x_k  \to \infty} \frac1{x_1\cdots x_k} \sum_{n_1\le x_1,\ldots,n_k\le x_k} f(n_1,\ldots,n_k)
\end{equation*}
exists and $a_{1,\ldots,1}=a^*_{1,\ldots,1} = M(f)$. See Ushiroya \cite[Th.\ 1]{Ush2012} and the author \cite[Sect.\ 7.1]{Tot2014}.

\begin{proof} We consider the case of the unitary Ramanujan sums $c^*_q(n)$. For any $n_1,\ldots,n_k\in \N$ we have, by using
property \eqref{sum_unit_Raman_sum},
\begin{equation*}
f(n_1,\ldots,n_k) = \sum_{d_1\mid n_1, \ldots, d_k\mid n_k} (\mu_k*f)(d_1,\ldots,d_k)
\end{equation*}
\begin{equation*}
= \sum_{d_1,\ldots,d_k=1}^{\infty} \frac{(\mu_k*f)(d_1,\ldots,d_k)}{d_1\cdots d_k}
\sum_{q_1\mid\mid d_1} c^*_{q_1}(n_1) \cdots \sum_{q_k\mid\mid d_k} c^*_{q_k}(n_k)
\end{equation*}
\begin{equation*}
= \sum_{q_1,\ldots,q_k=1}^{\infty} c^*_{q_1}(n_1) \cdots c^*_{q_k}(n_k) \sum_{\substack{d_1,\ldots,d_k=1\\
q_1\mid\mid d_1,\ldots,q_k\mid\mid d_k}}^{\infty} \frac{(\mu_k*f)(d_1,\ldots,d_k)}{d_1\cdots d_k},
\end{equation*}
giving expansion \eqref{f_Raman_exp_unit_several} with the coefficients \eqref{coeff_unit_case}, by
denoting $d_1=m_1q_1,\ldots,d_k=m_kq_k$. The rearranging of the terms is justified by the absolute convergence
of the multiple series, shown hereinafter:
\begin{equation*}
\sum_{q_1,\ldots,q_k=1}^{\infty} |a^*_{q_1,\ldots,q_k}| |c^*_{q_1}(n_1)| \cdots |c^*_{q_k}(n_k)|
\end{equation*}
\begin{equation*}
\le \sum_{\substack{q_1,\ldots,q_k=1\\m_1,\ldots,m_k=1\\(m_1,q_1)=1,\ldots,(m_k,q_k)=1}}^{\infty}
\frac{|(\mu_k*f)(m_1q_1,\ldots,m_kq_k)|}{m_1q_1\cdots m_kq_k} |c^*_{q_1}(n_1)| \cdots |c^*_{q_k}(n_k)|
\end{equation*}
\begin{equation*}
= \sum_{t_1,\ldots,t_k=1}^{\infty} \frac{|(\mu_k*f)(t_1,\ldots,t_k)|}{t_1\cdots t_k}
\sum_{\substack{m_1q_1=t_1\\ (m_1,q_1)=1}} |c^*_{q_1}(n_1)| \cdots  \sum_{\substack{m_kq_k=t_k\\ (m_k,q_k)=1}} |c^*_{q_k}(n_k)|
\end{equation*}
\begin{equation*}
\le n_1\cdots n_k \sum_{t_1,\ldots,t_k=1}^{\infty} 2^{\omega(t_1)+\cdots +\omega(t_k)} \frac{|(\mu_k*f)(t_1,\ldots,t_k)|}{t_1\cdots t_k}
<\infty,
\end{equation*}
by using inequality \eqref{unit_Raman_abs_ineq} and condition \eqref{cond_several_var}.

For the Ramanujan sums $c_q(n)$ the proof is along the same lines, by using inequality \eqref{Raman_abs_ineq}. In the case $k=2$
the proof of \eqref{f_Raman_exp_several} was given by Ushiroya \cite{Ush2016}.
\end{proof}

For multiplicative functions $f$, condition \eqref{cond_several_var} is equivalent to
\begin{equation} \label{cond_several_var_multipl}
\sum_{n_1,\ldots,n_k=1}^{\infty}  \frac{|(\mu_k*f)(n_1,\ldots,n_k)|}{n_1\cdots n_k}< \infty
\end{equation}
and to
\begin{equation} \label{cond_several_var_multipl_prod}
\sum_{p\in \Primes} \sum_{\substack{\nu_1,\ldots,\nu_k= 0\\ \nu_1+\ldots +\nu_k\ge 1}}^{\infty}
\frac{|(\mu_k*f)(p^{\nu_1},\ldots,p^{\nu_k})|}{p^{\nu_1+\cdots+ \nu_k}} < \infty.
\end{equation}

We deduce the following result:

\begin{corollary} \label{Cor_f_mult} Let $f:\N^k \to \C$ be a multiplicative function {\rm ($k\in \N$)}.
Assume that condition \eqref{cond_several_var_multipl} or  \eqref{cond_several_var_multipl_prod} holds.
Then for every $n_1,\ldots,n_k\in \N$ one has the absolutely convergent expansions \eqref{f_Raman_exp_several},
\eqref{f_Raman_exp_unit_several}, and the coefficients can be written as
\begin{equation*}
a_{q_1,\ldots,q_k} = \prod_{p\in \Primes} \quad \sum_{\nu_1\ge \nu_p(q_1),\ldots, \nu_k\ge \nu_p(q_k)}
\frac{(\mu_k*f)(p^{\nu_1},\ldots,p^{\nu_k})}{p^{\nu_1+\cdots + \nu_k}},
\end{equation*}
\begin{equation*}
a^*_{q_1,\ldots,q_k} = \prod_{p\in \Primes} \quad \sideset{}{'}\sum_{\nu_1,\ldots,\nu_k}
\frac{(\mu_k*f)(p^{\nu_1},\ldots,p^{\nu_k})}{p^{\nu_1+\cdots + \nu_k}},
\end{equation*}
where $\sum^{'}$ means that for fixed $p$ and $j$, $\nu_j$ takes all values $\nu_j\ge 0$ if $\nu_p(q_j)=0$, and takes only the value
$\nu_j=0$ if $\nu_p(q_j)\ge 1$.
\end{corollary}

\begin{proof} This is a direct consequence of Theorem \ref{Th_f_gen} and the definition of multiplicative functions.
In the cases $k=1$ and $k=2$, with the sums $c_q(n)$ see \cite[Eq.\ (7)]{Del1976} and \cite[Th.\ 2.4]{Ush2016},
respectively.
\end{proof}

Next we consider the case $f(n_1,\ldots,n_k)= g((n_1,\ldots,n_k))$.

\begin{theorem} \label{Th_f_g} Let $g:\N \to \C$ be an arithmetic function and let $k\in \N$. Assume that
\begin{equation} \label{cond_g_multipl}
\sum_{n=1}^{\infty} 2^{k\, \omega(n)} \frac{|(\mu*g)(n)|}{n^k} < \infty.
\end{equation}

Then for every $n_1,\ldots,n_k\in \N$,
\begin{equation} \label{f_Raman_g_gcd}
g((n_1,\ldots,n_k)) = \sum_{q_1,\ldots,q_k=1}^{\infty} a_{q_1,\ldots,q_k} c_{q_1}(n_1) \cdots c_{q_k}(n_k),
\end{equation}
\begin{equation} \label{f_Raman_unit_g_gcd}
g((n_1,\ldots,n_k)) = \sum_{q_1,\ldots,q_k=1}^{\infty} a^*_{q_1,\ldots,q_k} c^*_{q_1}(n_1) \cdots c^*_{q_k}(n_k)
\end{equation}
are absolutely convergent, where
\begin{equation*}
a_{q_1,\ldots,q_k} = \frac1{Q^k} \sum_{m=1}^{\infty} \frac{(\mu*g)(mQ)}{m^k},
\end{equation*}
\begin{equation*}
a^*_{q_1,\ldots,q_k} = \frac1{Q^k} \sum_{\substack{m=1\\ (m,Q)=1}}^{\infty} \frac{(\mu*g)(mQ)}{m^k},
\end{equation*}
with the notation $Q=[q_1,\ldots,q_k]$.
\end{theorem}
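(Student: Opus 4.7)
My plan is to obtain Theorem~\ref{Th_f_g} as a direct specialization of Theorem~\ref{Th_f_gen} to the function $f(n_1,\ldots,n_k) := g((n_1,\ldots,n_k))$, exploiting the fact that this $f$ depends on its arguments only through their greatest common divisor. Once Theorem~\ref{Th_f_gen} is invoked, the whole proof reduces to two bookkeeping steps: compute $\mu_k*f$ in order to verify the convergence hypothesis, and then collapse the $k$-fold coefficient sums to one-dimensional ones.

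The crux is the identification of $\mu_k*f$. Writing $g = 1*(\mu*g)$ in one variable and substituting gives
\begin{equation*}
f(n_1,\ldots,n_k) = \sum_{e\mid (n_1,\ldots,n_k)}(\mu*g)(e) = \sum_{e=1}^{\infty}(\mu*g)(e)\prod_{j=1}^k [e\mid n_j],
\end{equation*}
which exhibits $f$ as the $k$-variable Dirichlet convolution of the constant $1$ function on $\N^k$ with the diagonally supported $h:\N^k\to\C$ defined by $h(n_1,\ldots,n_k)=(\mu*g)(n)$ when $n_1=\cdots=n_k=n$, and $0$ otherwise. Since $\mu_k$ is the Dirichlet inverse of the constant $1$ function, this yields $\mu_k*f=h$. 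The $k$-fold sum in~\eqref{cond_several_var} thus collapses to the diagonal, producing exactly~\eqref{cond_g_multipl}; Theorem~\ref{Th_f_gen} therefore applies and delivers two absolutely convergent expansions of the form~\eqref{f_Raman_g_gcd} and~\eqref{f_Raman_unit_g_gcd}.

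It remains to simplify the coefficients provided by Theorem~\ref{Th_f_gen}. Substituting the diagonal support of $\mu_k*f$ into those formulas forces $m_1q_1=\cdots=m_kq_k=N$ for a common value $N$; such an $N$ is a (unitary, in the starred case) multiple of every $q_j$, and is naturally parametrised as $N=mQ$ with $Q=[q_1,\ldots,q_k]$. In the classical case $m$ ranges freely over $\N$, whereas in the unitary case the joint coprimality conditions $(m_j,q_j)=1$ reassemble into the single condition $(m,Q)=1$. Since $m_1q_1\cdots m_kq_k=N^k=(mQ)^k$, the $k$-index sum collapses onto a single sum over $m$, producing the stated formulas for $a_{q_1,\ldots,q_k}$ and $a^*_{q_1,\ldots,q_k}$.

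The essential ingredient is the diagonal support of $\mu_k*f$, a standard Möbius-inversion computation in $\N^k$. The step that deserves the most care is the passage, in the unitary case, from the $k$ coprimality conditions $(m_j,q_j)=1$ to the single condition $(m,Q)=1$ on the parameter of $N=mQ$; once that translation is made precise, the collapse of the multi-index sum into a one-index sum is purely formal.
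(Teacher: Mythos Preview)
Your proposal follows the paper's approach exactly: apply Theorem~\ref{Th_f_gen} to $f(n_1,\ldots,n_k)=g((n_1,\ldots,n_k))$, identify $(\mu_k*f)(n_1,\ldots,n_k)$ with $(\mu*g)(n)$ on the diagonal $n_1=\cdots=n_k=n$ and zero elsewhere, check that \eqref{cond_several_var} collapses to \eqref{cond_g_multipl}, and simplify the coefficient formulas. For the classical sums this is fine: $m_1q_1=\cdots=m_kq_k=N$ forces $q_j\mid N$ for all $j$, which is equivalent to $Q\mid N$, so $N=mQ$ with $m\in\N$ unrestricted.

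In the unitary case, however, the step you single out as the one ``that deserves the most care'' does not go through as you state it---and the paper makes the same slip. The constraints $m_jq_j=N$ with $(m_j,q_j)=1$ say precisely that $q_j\mid\mid N$ for every $j$. You claim these reassemble into $Q\mid\mid N$ (equivalently $N=mQ$ with $(m,Q)=1$), but only the forward implication holds. For $k=2$, $q_1=p$, $q_2=p^2$ one has $Q=p^2$; any $N$ with $Q\mid\mid N$ has $\nu_p(N)=2$, so $p\mid\mid N$ fails. Here there is no admissible $N$ at all, hence Theorem~\ref{Th_f_gen} gives $a^*_{p,p^2}=0$, whereas the displayed formula yields $p^{-4}\sum_{(m,p)=1}(\mu*g)(mp^2)/m^2$, generically nonzero. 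The correct equivalence is: $q_j\mid\mid N$ for all $j$ if and only if $Q\mid\mid N$ \emph{and} $q_j\mid\mid Q$ for all $j$. Consequently the formula for $a^*_{q_1,\ldots,q_k}$ you (and the paper) derive is valid only for tuples with $q_j\mid\mid Q$ for every $j$; for all other tuples the coefficient coming from Theorem~\ref{Th_f_gen} is zero. One can check directly (e.g.\ with $k=2$, $n_1=n_2=1$, $g=\sigma/\id$) that the expansion with the stated $a^*$ does not sum to $g(1)$, so this is not merely a cosmetic discrepancy between two valid coefficient systems.
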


\begin{proof} We apply Theorem \ref{Th_f_gen}. The identity
\begin{equation*}
g((n_1,\ldots,n_k))= \sum_{d\mid n_1,\ldots,d\mid n_k} (\mu*g)(d)
\end{equation*}
shows that now
\begin{equation*}
(\mu_k*f)(n_1,\ldots,n_k)= \begin{cases} (\mu*g)(n), & \text{ if $n_1=\cdots = n_k=n$,}\\ 0, & \text{ otherwise.}
\end{cases}
\end{equation*}

In the unitary case the coefficients are
\begin{equation*}
a^*_{q_1,\ldots,q_k} = \sum_{\substack{n=1\\ m_1q_1=\cdots = m_kq_k=n\\(m_1,q_1)=1,\ldots,(m_k,q_k)=1}}^{\infty}
\frac{(\mu_k*f)(m_1q_1,\ldots,m_kq_k)}{m_1q_1\cdots m_kq_k}
\end{equation*}
\begin{equation*}
=  \sum_{\substack{n=1\\ q_1\mid\mid n, \ldots, q_k\mid\mid n}}^{\infty}
\frac{(\mu*g)(n)}{n^k},
\end{equation*}
and take into account that $q_1\mid\mid n,\ldots,q_k\mid\mid n$ holds if and only if $[q_1,\ldots,q_k]=Q\mid\mid n$, that is,
$n=mQ$ with $(m,Q)=1$.

In the classical case, namely for the coefficients $a_{q_1,\ldots,q_k}$, the proof is analog.
\end{proof}

If the function $g$ is multiplicative, then condition \eqref{cond_g_multipl} is equivalent to
\begin{equation} \label{cond_g_multipl_var}
\sum_{n=1}^{\infty}  \frac{|(\mu*g)(n)|}{n^k}< \infty
\end{equation}
and to
\begin{equation} \label{cond_g_multipl_prod}
\sum_{p\in \Primes} \sum_{\nu=1}^{\infty}
\frac{|(\mu*g)(p^{\nu})|}{p^{k\, \nu}} < \infty,
\end{equation}
and Theorem \ref{Th_f_g} can be rewritten according to Corollary \ref{Cor_f_mult}.
We continue, instead, with the following result, having direct applications to special functions.

\begin{corollary} \label{Cor_f_gcompl_multip_only_multip} Let $g:\N \to \C$ be a multiplicative function and let $k\in \N$. Assume that
condition \eqref{cond_g_multipl_var} or \eqref{cond_g_multipl_prod} holds.

If $\mu*g$ is completely multiplicative, then \eqref{f_Raman_g_gcd}
holds for every $n_1,\ldots,n_k\in \N$ with
\begin{equation*}
a_{q_1,\ldots,q_k} = \frac{(\mu*g)(Q)}{Q^k} \sum_{m=1}^{\infty} \frac{(\mu*g)(m)}{m^k}.
\end{equation*}

Identity \eqref{f_Raman_unit_g_gcd} holds for every $n_1,\ldots,n_k\in \N$ {\rm (and any multiplicative $g$)} with
\begin{equation*}
a^*_{q_1,\ldots,q_k} = \frac{(\mu*g)(Q)}{Q^k} \sum_{\substack{m=1\\(m,Q)=1}}^{\infty} \frac{(\mu*g)(m)}{m^k}.
\end{equation*}
\end{corollary}

\begin{proof} This is immediate by Theorem \ref{Th_f_g}. By making use of condition $(m,Q)=1$, it follows for the coefficients
$a^*_{q_1,\ldots,q_k}$ that $(\mu *g)(mQ)=(\mu *g)(m)(\mu *g)(Q)$ for any multiplicative function $g$.
\end{proof}

\begin{corollary} \label{Cor_sigma_s} For every $n_1,\ldots,n_k\in \N$ the following series are absolutely convergent:
\begin{equation*}
\frac{\sigma_s((n_1,\ldots,n_k))}{(n_1,\ldots,n_k)^s} = \zeta(s+k) \sum_{q_1,\ldots,q_k=1}^{\infty}
\frac{c_{q_1}(n_1)\cdots c_{q_k}(n_k)}{Q^{s+k}} \quad (s\in \R, s+k>1),
\end{equation*}
\begin{equation} \label{sigma_k}
\frac{\sigma((n_1,\ldots,n_k))}{(n_1,\ldots,n_k)} = \zeta(k+1) \sum_{q_1,\ldots,q_k=1}^{\infty}
\frac{c_{q_1}(n_1)\cdots c_{q_k}(n_k)}{Q^{k+1}} \quad (k\ge 1),
\end{equation}
\begin{equation} \label{tau_k}
\tau((n_1,\ldots,n_k)) = \zeta(k) \sum_{q_1,\ldots,q_k=1}^{\infty}
\frac{c_{q_1}(n_1)\cdots c_{q_k}(n_k)}{Q^k} \quad (k\ge 2).
\end{equation}
\end{corollary}

\begin{proof} Apply Corollary \ref{Cor_f_gcompl_multip_only_multip} for $g(n)=\sigma_s(n)/n^s$, where the function $(\mu*g)(n)=1/n^s$
is completely multiplicative.
\end{proof}

In the case $k=2$ the identities of Corollary \ref{Cor_sigma_s} were given in \cite[Ex.\ 3.8, 3.9]{Ush2016}. For $k=2$,
\eqref{sigma_k} and \eqref{tau_k} reduce to \eqref{exp_sigma_two_var} and \eqref{exp_tau_two_var}, respectively.

\begin{corollary} \label{Cor_phi_s_unit} For every $n_1,\ldots,n_k\in \N$ the following series are absolutely convergent:
\begin{equation*}
\frac{\sigma_s((n_1,\ldots,n_k))}{(n_1,\ldots,n_k)^s} = \zeta(s+k) \sum_{q_1,\ldots,q_k=1}^{\infty}
\frac{\phi_{s+k}(Q) c^*_{q_1}(n_1)\cdots c^*_{q_k}(n_k)}{Q^{2(s+k)}}
\end{equation*}
with $s\in \R, s+k>1$,
\begin{equation} \label{sigma_k_unit}
\frac{\sigma((n_1,\ldots,n_k))}{(n_1,\ldots,n_k)} = \zeta(k+1) \sum_{q_1,\ldots,q_k=1}^{\infty}
\frac{\phi_{k+1}(Q) c^*_{q_1}(n_1)\cdots c^*_{q_k}(n_k)}{Q^{2(k+1)}} \quad (k\ge 1),
\end{equation}
\begin{equation} \label{tau_k_unit}
\tau((n_1,\ldots,n_k)) = \zeta(k) \sum_{q_1,\ldots,q_k=1}^{\infty}
\frac{\phi_k(Q) c^*_{q_1}(n_1)\cdots c^*_{q_k}(n_k)}{Q^{2k}} \quad (k\ge 2).
\end{equation}
\end{corollary}

\begin{proof} Apply the second part of Corollary \ref{Cor_f_gcompl_multip_only_multip} for $g(n)=\sigma_s(n)/n^s$. Since $(\mu*g)(n)=1/n^s$, we
deduce that
\begin{equation*}
a^*_{q_1,\ldots,q_k} = \frac1{Q^{s+k}} \sum_{\substack{m=1\\(m,Q)=1}}^{\infty} \frac1{m^{s+k}}= \zeta(s+k) \frac{\phi_{s+k}(Q)}{Q^{2(s+k)}}.
\end{equation*}
\end{proof}

If $k=2$, then identities \eqref{sigma_k_unit} and \eqref{tau_k_unit} recover \eqref{sigma_two_var_unit}
and \eqref{tau_two_var_unit}, respectively.

Corollary \ref{Cor_f_gcompl_multip_only_multip} can be applied for several other special functions $g$. Further examples are
$g(n)=\beta_s(n)/n^s$ ($s\in \R$) and $g(n)=r(n)$, where $\beta_s(n)=\sum_{d\mid n} d^s\lambda(n/d)$ and $r(n)$ denotes the number of
solutions $(x,y)\in \Z^2$ of the equation $x^2+y^2=n$.

\begin{corollary} \label{Cor_beta_s} For every $n_1,\ldots,n_k\in \N$ and $s\in \R, s+k>1$ the following series are absolutely
convergent:
\begin{equation} \label{beta_1_1}
\frac{\beta_s((n_1,\ldots,n_k))}{(n_1,\ldots,n_k)^s} = \frac{\zeta(2(s+k))}{\zeta(s+k)} \sum_{q_1,\ldots,q_k=1}^{\infty}
\frac{\lambda(Q)c_{q_1}(n_1)\cdots c_{q_k}(n_k)}{Q^{s+k}},
\end{equation}
\begin{equation*}
\frac{\beta_s((n_1,\ldots,n_k))}{(n_1,\ldots,n_k)^s} = \frac{\zeta(2(s+k))}{\zeta(s+k)} \sum_{q_1,\ldots,q_k=1}^{\infty}
\frac{\lambda(Q)\psi_{s+k}(Q) c^*_{q_1}(n_1)\cdots c^*_{q_k}(n_k)}{Q^{2(s+k)}}.
\end{equation*}
\end{corollary}

In the case $k=s=1$, identity \eqref{beta_1_1} was derived by Cohen \cite[Eq.\ (12)]{Coh1961}. See also the author
\cite[Eq.\ (16)]{Tot2013}.

\begin{corollary} For every $n_1,\ldots,n_k\in \N$ \textup{($k\ge 1$)} the following series are absolutely convergent:
\begin{equation} \label{id_r_k}
r((n_1,\ldots,n_k)) = 4 L(\chi,k) \sum_{\substack{q_1,\ldots,q_k=1\\Q \text{ odd }}}^{\infty} \frac{(-1)^{(Q-1)/2} c_{q_1}(n_1)
\cdots c_{q_k}(n_k)}{Q^k},
\end{equation}
\begin{equation*}
r((n_1,\ldots,n_k)) = 4 L(\chi,k) \sum_{\substack{q_1,\ldots,q_k=1\\Q \text{ odd }}}^{\infty} \frac{(-1)^{(Q-1)/2}
F(Q)}{Q^k} c^*_{q_1}(n_1) \cdots c^*_{q_k}(n_k),
\end{equation*}
where
\begin{equation*}
L(\chi,k)= \sum_{n=0}^{\infty} \frac{(-1)^n}{(2n+1)^k}= \prod_{\substack{p\in \Primes \\ p>2}} \left(1-\frac{(-1)^{(p-1)/2}}{p^k} \right)^{-1},
\end{equation*}
$\chi=\chi_4$ being the nonprincipal character (mod $4$) and
\begin{equation*}
F(Q)= \prod_{\substack{p\mid Q\\ p>2}} \left(1-(-1)^{(p-1)/2}/p^k\right).
\end{equation*}
\end{corollary}

\begin{proof} Use that $r(n)=4\sum_{d\mid n} \chi(d)$, thus $\mu * r/4=\chi$, completely multiplicative.
Here \eqref{id_r_k} is well known for $k=1$, and was obtained in \cite[Ex. 3.13]{Ush2016} in the case $k=2$.
\end{proof}

If $g$ is multiplicative such that $\mu*g$ is not completely multiplicative, then Corollary \ref{Cor_f_gcompl_multip_only_multip}
can be applied for the sums $c^*_q(n)$, but not for $c_q(n)$, in general.

\begin{corollary} For every $n_1,\ldots,n_k\in \N$ the following series are absolutely convergent:
\begin{equation} \label{phi_s_several_var_unit}
\frac{\phi_s((n_1,\ldots,n_k))}{(n_1,\ldots,n_k)^s} = \frac1{\zeta(s+k)} \sum_{q_1,\ldots,q_k=1}^{\infty}
\frac{\mu(Q) c^*_{q_1}(n_1)\cdots c^*_{q_k}(n_k)}{\phi_{s+k}(Q)}
\end{equation}
with $s\in \R, s+k>1$,
\begin{equation} \label{phi_several_var_unit}
\frac{\phi((n_1,\ldots,n_k))}{(n_1,\ldots,n_k)} = \frac1{\zeta(k+1)} \sum_{q_1,\ldots,q_k=1}^{\infty}
\frac{\mu(Q) c^*_{q_1}(n_1)\cdots c^*_{q_k}(n_k)}{\phi_{k+1}(Q)} \quad (k\ge 1).
\end{equation}
\end{corollary}

\begin{proof} Similar to the proof of Corollary \ref{Cor_phi_s_unit}, selecting $g(n)=\phi_s(n)/n^s$, where $(\mu*g)(n)=\mu(n)/n^s$.
\end{proof}

However, observe that if $Q$ is not squarefree, then $\mu(Q)=0$ and all terms of the sums in
\eqref{phi_s_several_var_unit} and \eqref{phi_several_var_unit} are zero. Now if $Q$ is squarefree, then
$q_j$ is squarefree and $c^*_{q_j}(n_j)=c_{q_j}(n_j)$ for any $j$ and any $n_j$. Thus we deduce the next identities:

\begin{corollary} For every $n_1,\ldots,n_k\in \N$ the following series are absolutely convergent:
\begin{equation} \label{phi_s_several_var}
\frac{\phi_s((n_1,\ldots,n_k))}{(n_1,\ldots,n_k)^s} = \frac1{\zeta(s+k)} \sum_{q_1,\ldots,q_k=1}^{\infty}
\frac{\mu(Q) c_{q_1}(n_1)\cdots c_{q_k}(n_k)}{\phi_{s+k}(Q)}
\end{equation}
with $s\in \R, s+k>1$,
\begin{equation} \label{phi_several_var}
\frac{\phi((n_1,\ldots,n_k))}{(n_1,\ldots,n_k)} = \frac1{\zeta(k+1)} \sum_{q_1,\ldots,q_k=1}^{\infty}
\frac{\mu(Q) c_{q_1}(n_1)\cdots c_{q_k}(n_k)}{\phi_{k+1}(Q)} \quad (k\ge 1),
\end{equation}
which are formally the same identities as \eqref{phi_s_several_var_unit} and \eqref{phi_several_var_unit}.
\end{corollary}

Note that the special identities \eqref{phi_s_several_var} and \eqref{phi_several_var} can also be derived by the first part
of Corollary \ref{Cor_f_gcompl_multip_only_multip}, without considering the unitary sums
$c^*_q(n)$. See Ushiroya \cite[Ex.\ 3.10, 3.11]{Ush2016} for the case $k=2$, using different arguments.

Similar formulas can be deduced for the generalized Dedekind function $\psi_s(n)=n^s \prod_{p\mid n} (1+1/p^s)$ ($s\in \R$).
See Cohen \cite[Eq.\ (13)]{Coh1961} in the case $k=s=1$.

Finally, consider the Piltz divisor function $\tau_m$. The following formula, concerning the sums $c^*_q(n)$, deduced along the same lines with
the previous ones, has no simple counterpart in the classical case.

\begin{corollary} For any $n_1,\ldots,n_k,m\in \N$ with $m\ge 2, k\ge 2$,
\begin{equation} \label{id_Piltz_unit}
\tau_m((n_1,\ldots,n_k)) = (\zeta(k))^{m-1} \sum_{q_1,\ldots,q_k=1}^{\infty}
\frac{\tau_{m-1}(Q) (\phi_k(Q))^{m-1}}{Q^{km}} c^*_{q_1}(n_1)\cdots c^*_{q_k}(n_k).
\end{equation}
\end{corollary}

If $m=2$, then \eqref{id_Piltz_unit} reduces to formula \eqref{tau_k_unit}, while for $m=3$ and $k=2$ it gives
\eqref{id_Piltz_unit_two}.

\section{Acknowledgement}
The author thanks the anonymous referee for careful reading of the manuscript and helpful comments.

\end{document}